\documentclass[12pt,reqno]{amsart}

\usepackage{amssymb,amsfonts,latexsym,amstext,epsfig,color}

\newtheorem{theorem}{Theorem}
\newtheorem{lemma}[theorem]{Lemma}

\begin{document}

\title[Burning spiders and path-forests]{Bounds on the burning numbers of spiders and path-forests}
\author{Anthony Bonato}
\address{Department of Mathematics\\
Ryerson University\\
Toronto, ON\\
Canada, M5B 2K3} \email{abonato@ryerson.ca}
\author{Thomas Lidbetter}
\address{Department of Management Science \& Information Systems\\
Rutgers University\\
Newark, NJ\\
USA, 07102} \email{tlidbetter@business.rutgers.edu}

\keywords{graphs, burning number, trees, spiders, path-forest, approximation algorithm}
\thanks{The first author gratefully acknowledge support from NSERC}
\subjclass{05C05,05C85}

\begin{abstract}
Graph burning is one model for the spread of memes and contagion in social networks. The corresponding graph parameter is the burning number of a graph $G$, written $b(G)$, which measures the speed
of the social contagion. While it is conjectured that the burning number of a connected graph of order $n$ is at most $\lceil \sqrt{n} \rceil$, this remains open in general and in many graph
families. We prove the conjectured bound for spider graphs, which are trees with exactly one vertex of degree at least 3. To prove our result for spiders, we develop new bounds on the burning
number for path-forests, which in turn leads to a $\frac 3 2$-approximation algorithm for computing the burning number of path-forests.
\end{abstract}

\maketitle

\section{Introduction}

Internet memes spread quickly across social networks such as Facebook and Instagram. The burning number of a graph was introduced as a simple model of spreading memes or other kinds of social
contagion in~\cite{BJR,thez}. The smaller the burning number is, the faster a contagion (such as a meme, news, or gossip) spreads in the network.

Given a graph $G$, the burning process on $G$ is a discrete-time process defined as follows. Initially, at time $t=0$ all vertices are unburned. At each time step $t \geq 1$, one new unburned vertex
is chosen to burn (if such a vertex is available); such a vertex is called a \emph{source of fire}. If a vertex is burned, then it remains in that state until the end of the process. Once a vertex is
burned in round $t$, in round $t+1$ each of its unburned neighbors becomes burned. The process ends when all vertices of $G$ are burned (that is, let $T$ be the smallest positive integer such that
there is at least one vertex not burning in round $T-1$ and all vertices are burned in round $T$). The \emph{burning number} of a graph $G$, denoted by $b(G)$, is the minimum number of rounds needed
for the process to end. Note that with our notation, $b(G) = T$. The vertices that are chosen to be burned are referred to as a \emph{burning sequence}; a shortest such sequence is called
\emph{optimal}. Note that optimal burning sequences have length $b(G).$

For example, for the path $P_4$ with vertices $\{v_1, v_2, v_3, v_4\}$, the sequence $(v_2, v_4)$ is an optimal burning sequence; see Figure~\ref{p4}.
\begin{figure}[h]
\begin{center}
\epsfig{figure=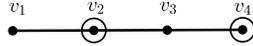,width=1.4in,height=.32in}
\caption{Burning the path $P_4$ (the open circles represent burned vertices).} \label{p4}
\end{center}
\end{figure}

It is evident that $b(G) \le \mathrm{rad}(G) + 1$, where $\mathrm{rad}(G)$ is the radius of $G.$ However, this bound can also be far from being tight; for example, for a path on $n$ vertices we have
that $\mathrm{rad}(P_n)=\lfloor \frac{n}{2} \rfloor$, whereas $b(P_n)=\lceil \sqrt{n} \rceil$. Results from \cite{BJR} give the bound $b(G) \leq 2\lceil{n}^{1/2}\rceil - 1$, where $G$ is connected of
order $n.$

An important conjecture in graph burning, first stated \cite{BJR}, is the following. \vspace{0.1in}

\noindent \textbf{Burning number conjecture}: For a connected graph $G$ of order $n$, $$b(G)\leq \lceil {n}^{1/2}\rceil.$$ \vspace{0.1in}

If the burning number conjecture holds, then paths are examples of graphs with highest burning number. Indeed, as shown in \cite{BJR}, the conjecture holds if it is satisfied by trees. So far, the
conjecture has resisted attempts at its resolution. In \cite{bessy2}, it was proved that $$b(G)\leq \sqrt{\frac{32}{19}\cdot \frac{n}{1-\epsilon}}+\sqrt{\frac{27}{19\epsilon}}$$ and
$$b(G)\leq \sqrt{\frac{12n}{7}}+3\approx 1.309 \sqrt{n}+3$$ for every connected graph $G$ of order $n$ and every $0<\epsilon<1$. These bounds were improved in \cite{LL} to $$b(G)\le \bigg \lceil
\frac{-3+\sqrt{24n+33}}{4} \bigg \rceil.$$ We also note that a randomized notion of burning was studied in \cite{rburn}, and burning was considered in circulant graphs in \cite{shannon}.

In this paper, we settle the burning number conjecture for the class of spider graphs, which are trees with exactly one vertex of degree strictly greater than two; see Theorem~\ref{maint}. While
spiders might initially appear to be an elementary graph class in the context of graph burning, it was shown in \cite{bjr3} that computing the burning number on spiders is \textbf{NP}-complete. The
main ingredient in our proof of the conjecture for spiders relies on new bounds on the burning number of path-forests (that is, disjoint unions of paths); see Lemmas~\ref{thm:path-forest}
and~\ref{lem:path-forest2}. The bounds provided in our results improve on the known bound given in \cite{BJR} of $b(G) \le \lceil n^{1/2} \rceil +t-1$, where $G$ is a path-forest of order $n$ with
$t$ components. As shown in \cite{bjr3}, the problem of computing the burning number of path-forests is also \textbf{NP}-complete. Our bounds provide a $\frac 3 2$-approximation algorithm for
computing the burning number of path-forests; see Theorem~\ref{thgreed}.

All graphs we consider are simple, finite, and undirected. We say that a collection of subsets of vertices of a graph $G$ {\em covers} $G$ if the union of those subsets is $V(G)$. For a vertex $v$
and a non-negative integer $r$, the {\em $k$th closed neighborhood} $N_r[v]$ of $v$ is defined as the set of all vertices within distance $r$ of $v$ (including $v$ itself). For background on graph
theory, see \cite{west}.

\section{A greedy algorithm for burning path-forests}

In this section, we derive two upper bounds on the burning number of a path-forest $G$, and show that these upper bounds define an algorithm for burning $G$. We then show that these bounds imply that
the algorithm has an approximation ratio of $\frac 3 2$. That is, the number of steps it requires to burn a path-forest $G$ is at most $\frac 3 2 b(G)$. Note that the algorithm is not optimal;
indeed, as referenced in the introduction, the problem of finding $b(G)$ if $G$ is a path-forest is \textbf{NP}-complete (see \cite{bjr3}).

We first point out that there is a simple {\em lower bound} on the burning number $b(G)$ of a path-forest of order $n$ with $t$ components:
\begin{align}
b(G) \ge \max \{\lceil n^{1/2} \rceil ,t \}. \label{eq:lb}
\end{align}
This inequality follows from the fact that the burning number of $G$ is at least the burning number, $\lceil n^{1/2} \rceil$ of a path of length $n$, and the number of fires required to burn any
graph with $t$ components is at least $t$.

Before stating and proving the lemmas we make a simple but useful lemma, which gives an equivalence between graph burning and a certain covering problem. This is analogous (but not identical to)
Theorem~2 and Corollary~3 from \cite{BJR}.

\begin{lemma} \label{obs:covering}
A graph $G$ satisfies $b(G) \le M$ for some integer $M$ if and only if for some $k \ge 1$,
\[
V(G) = N_{r_1}[v_1] \cup \ldots \cup N_{r_k}[v_k],
\]
for some vertices $v_i$ and integers $r_i \le M-i,i=1,\ldots,k$.
\end{lemma}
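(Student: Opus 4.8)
The plan is to prove the two directions of the stated equivalence separately, in each case passing between the time-dynamics of the burning process and unions of closed neighborhoods. For the forward implication, I would start from an optimal burning sequence $(x_1,\ldots,x_k)$ with $k=b(G)\le M$. The essential point is the elementary fact that a vertex $w$ is burned by the end of round $k$ if and only if $d(w,x_i)\le k-i$ for some $i\in\{1,\ldots,k\}$: the ``if'' direction holds since a fire started at $x_i$ in round $i$ reaches every vertex at distance $d$ by round $i+d$, and the ``only if'' direction follows by induction on the first round in which $w$ catches fire. Since the process has ended by round $k$, every vertex is burned, and hence $V(G)=N_{k-1}[x_1]\cup\ldots\cup N_{0}[x_k]$. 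Taking $v_i=x_i$ and $r_i=k-i$ then completes this direction, because $0\le k-i\le M-i$.

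For the backward implication, suppose $V(G)=N_{r_1}[v_1]\cup\ldots\cup N_{r_k}[v_k]$ with $0\le r_i\le M-i$; observe that this already forces $k\le M$. I would exhibit directly a burning sequence that finishes within $M$ rounds, defined greedily: in round $i$, light $v_i$ if it is still unburned, and otherwise light any unburned vertex (stopping once all of $G$ is burned). The key claim is that for each $i\le k$ the vertex $v_i$ is burned by the end of round $i$ --- this holds because either $v_i$ was chosen as the source in round $i$, or $v_i$ was already burned before round $i$, or the process had already terminated. Granting the claim, the fire at $v_i$ has at least $M-i$ further rounds to spread, so $N_{r_i}[v_i]\subseteq N_{M-i}[v_i]$ is burned by the end of round $M$; taking the union over $i$ shows that $G$ is burned within $M$ rounds, i.e.\ $b(G)\le M$.

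The step needing the most care is the construction in the backward direction: one cannot simply insist that $v_i$ be lit in round $i$, since $v_i$ may already be on fire by then. The resolution is that this is harmless, because the argument only uses that $v_i$ is burned by the \emph{end} of round $i$, and the inequality $r_i\le M-i$ supplies exactly the slack the fire needs to cover $N_{r_i}[v_i]$ in the rounds that remain. By comparison, the forward direction is routine bookkeeping about how far each fire has travelled by the time every vertex is burned, combined with the observation that $k\le M$ lets us weaken $k-i$ to $M-i$.
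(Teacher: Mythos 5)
Your proposal is correct and follows essentially the same route as the paper: the forward direction reads off the radii $r_i=k-i$ from an optimal burning sequence, and the backward direction converts the covering into a burning sequence that lights (a substitute for) $v_i$ in round $i$. Your treatment of the backward direction is in fact more careful than the paper's, which silently passes over the possibility that $v_i$ is already burned when its turn comes; your observation that one only needs $v_i$ burned by the \emph{end} of round $i$ is exactly the right fix.
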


\begin{proof} For the forward implication, take an optimal burning sequence $(v_1, \ldots,v_{b(G)})$, set $k=b(G)$ and $r_i = b(G) -i$ for $1 \le i \le b(G)$. Then the union over $i=1,\ldots,k$ of the neighborhoods $N_{r_i}[v_i]$ is $V(G)$. For the
reverse implication, the sequence $(v_i)_{i=1}^M$ burns the graph in time at most $M$, where for $k+1 \le i \le M$, the vertex $v_i$ can be chosen to be any arbitrary vertex that has not yet appeared
in the sequence. \end{proof}

Lemma~\ref{obs:covering} implies that in order to prove a bound of the form $b(G) \le M$ using induction (assuming it has been verified for some base cases), it is sufficient to remove some
neighborhood $N_{r_1}[v_1]$ from $G$, where $r_1 \le M-1$, then show that the burning number of the new graph is at most $M-1$. We will use this proof technique for the following two lemmas.

\begin{lemma} \label{thm:path-forest}
If $G$ is a path-forest of order $n$ with $t \ge 1$ components, then
\begin{align}
b(G) &\le \left \lfloor \frac{n}{2t} \right \rfloor  + t. \label{eq:ub}
\end{align}
\end{lemma}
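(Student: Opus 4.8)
The plan is to prove the bound by induction on $n$, invoking Lemma~\ref{obs:covering} in the form highlighted in the remark after its proof: to establish $b(G)\le M$ it suffices to delete from $G$ a single closed neighborhood $N_{r_1}[v_1]$ with $r_1\le M-1$ and check that the remaining path-forest $G'$ satisfies $b(G')\le M-1$. Throughout write $q=\lfloor n/(2t)\rfloor$ and $M=q+t$, so $2tq\le n\le 2tq+2t-1$, and let $\ell_1\ge\cdots\ge\ell_t$ be the orders of the components of $G$. For the base case $q=0$ (that is, $n\le 2t-1$) one must show $b(G)\le t$: cover each component by a single ball, noting that a ball of radius $t-i$ covers the $i$th largest component because $\lfloor\ell_i/2\rfloor\le t-i$ (using $\ell_i\le n/i$, $n\le 2t-1$, and $t/i\le t-i+1$); since the radii $t-i$ meet the requirement $r_i\le M-i$ with $M=t$, Lemma~\ref{obs:covering} gives $b(G)\le t=M$.

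Now assume $q\ge1$. If $\ell_1\le 2q$, then since every $\ell_i\le 2q$ while $\sum_i\ell_i=n\ge 2tq$, we must have $n=2tq$ and $\ell_i=2q$ for all $i$; that is, $G$ is a disjoint union of $t$ copies of $P_{2q}$. A ball of radius $q$ about a central vertex covers $P_{2q}$, and the constant radii $q,q,\dots,q$ (one per component) satisfy $q\le(q+t)-i$ for $i=1,\dots,t$, so Lemma~\ref{obs:covering} yields $b(G)\le q+t=M$.

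In the remaining case $\ell_1\ge 2q+1$ I would apply the inductive reduction by deleting from a longest component a ball of radius $r_1=M-1=q+t-1$. If $\ell_1\le 2q+2t-1$ this removes that component entirely, leaving a path-forest $G'$ with $t'=t-1$ components and $n'=n-\ell_1$ vertices (with $2q+1\le\ell_1\le 2q+2t-1$); if $\ell_1>2q+2t-1$, one centres the ball so as to peel $2q+2t-1$ vertices off one end, leaving $G'$ with $t'=t$ components and $n'=n-(2q+2t-1)$ vertices. In the peeling case $n'\le 2q(t-1)\le 2tq-1$, so $\lfloor n'/(2t)\rfloor+t\le(q-1)+t=M-1$. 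In the deletion case with $t\ge2$ one needs $n'\le 2(t-1)(q+1)-1$; this holds because $n'\le n-(2q+2)\le 2(t-1)(q+1)-1$ when $\ell_1\ge 2q+2$, while if $\ell_1=2q+1$ then all components have order at most $2q+1$, so $n\le t(2q+1)$ and $n'\le(t-1)(2q+1)\le 2(t-1)(q+1)-1$; and if $t=1$ the deletion leaves $G'$ empty. In every subcase the inductive hypothesis gives $b(G')\le\lfloor n'/(2t')\rfloor+t'\le M-1$, and the reduction then yields $b(G)\le M$.

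The crux is this last case, and the delicate point is that the inequalities $n'\le 2t'(q+1)-1$ are only barely true: they force the split between deleting a whole component and peeling $2q+2t-1$ vertices off a long one, and in the tightest subcase they rely on the elementary observation that a longest component has at least $\lceil n/t\rceil$ vertices. Everything else is routine bookkeeping with floors.
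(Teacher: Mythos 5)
Your proposal is correct and follows essentially the same strategy as the paper: induction on $n$ via the covering reduction of Lemma~\ref{obs:covering}, with the base case $\lfloor n/(2t)\rfloor=0$ handled by covering each component with a ball of appropriate radius, and the inductive step splitting according to whether the longest component can be deleted whole by a ball of radius $M-1$ or must have $2M-1$ vertices peeled off one end. The minor variations (a direct rather than contradiction argument in the base case, and the separate treatment of the all-components-$P_{2q}$ sub-case) are cosmetic and all the floor bookkeeping checks out.
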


We observe that the bound~(\ref{eq:ub}) in the lemma is tight when $G$ consists of a set of $t$ disjoint paths of order 1 (that is, if $G$ is a co-clique).

\begin{proof}[Proof of Lemma~\ref{thm:path-forest}] The proof is by induction on $n$. For the base cases, we consider the collection of path-forests such that $\lfloor n/(2t) \rfloor = 0$. In this case, we must show that $b(G) \le t$ (which necessarily means that $b(G)=t$,
by~(\ref{eq:lb})). Let the orders of the $t$ components of $G$ be $a_1,\dots,a_t$, where the $a_i$ are arranged in non-decreasing order. We must show that the $j$th component has order at most
$2j-1$, so that it can be covered by a closed neighborhood of radius $j-1$.

Suppose not, then it must be that for some $j$, we have $a_j \ge 2j$. In that case, for $i>j$, we must have $a_i \ge 2j$ and because for all $i$ we have $a_i \ge 1$, it follows that
	\begin{align*}
	n &\ge (j-1)+(t-j+1)2j \\
	&= 2t + 2t(j-1) + 3j - 1 - 2j^2 \\
	& \ge 2t + j-1,
	\end{align*} where the last inequality holds since $t\ge j$.  This contradicts $\lfloor n/(2t) \rfloor =0$, so we must have that $b(G) \le t$.

Now suppose $ \lfloor n/(2t) \rfloor \ge 1$, and suppose the lemma is true for smaller values of $n$. Let $M= \lfloor n/(2t) \rfloor + t$. By Lemma~\ref{obs:covering} and the remark following it, it is sufficient to remove some closed neighborhood of radius at most $M-1$ from $G$, and show that the new graph has burning number at most $M-1$.

%We must prove that it is possible to cover $G$ with paths of orders $1,3,\ldots,2k-1$, where $k= \lfloor n/(2t) \rfloor + t$.

We consider two cases.

\medskip

\noindent {\em Case 1.} The radius of the largest components of $G$ is greater than $M-1$.

\medskip

In this case, let $G'$ be the graph with $t$ components obtained by removing a subpath of order $2M-1$ from a largest component of $G$. Then the order of $G'$ satisfies
\begin{align}
v(G') &\le n-(2M-1) \le n-2t, \label{eq:2}
\end{align}
with the second inequality following from $\lfloor n/(2t) \rfloor \ge 1$. Hence, by induction,
\begin{align*}
b(G') &\le \left \lfloor \frac{v(G')}{2t} \right \rfloor + t\\
& \le \left \lfloor \frac{n-2t}{2t} \right \rfloor +t \\
& = M-1,
\end{align*}
where the second inequality holds by~(\ref{eq:2}).

\medskip

\noindent {\em Case 2.} The radius of the largest components of $G$ is at most $M-1$.

\medskip

In this case, let $G''$ be the graph with $t-1$ components obtained by removing a largest component of $G$. We must prove that $b(G'') \le M -1$. Since the average order of the components of $G$ is
$n/t$, the largest components of $G$ must have order at least $\lceil n/t \rceil$. Hence, the order $v(G'')$ of $G''$ satisfies
\begin{align}
v(G'') & \le n- \lceil n/t \rceil \le n(t-1)/t. \label{eq:1}
\end{align}
Thus, by induction we have that
\begin{align*}
b(G') &\le \left \lfloor \frac {v(G'')}{2(t-1)} \right \rfloor +t-1 \\
& \le M-1,
\end{align*}
where the second inequality follows from~(\ref{eq:1}).
\end{proof}

%Note that the theorem gives a constructive ``greedy'' algorithm that produces the upper bound~(\ref{eq:ub}). Also note that for $t \ge n^{1/2}/2$, the alternative upper bound $b(G) \le \lceil n^{1/2} \rceil +t-1$ is better.

For small $t$ (roughly, $t \le n^{1/2}/2$), the bound~(\ref{eq:ub}) is worse than the simpler bound $b(G) \le \lceil n^{1/2} \rceil + t -1$. We give an improved bound on the burning number for small
$t$ in the next lemma.

\begin{lemma} \label{lem:path-forest2}
	If $G$ is a path-forest of order $n$ with $t \le \lceil n^{1/2} \rceil$ components, then
	\begin{align}
	b(G) \le \left \lceil n^{1/2} + \frac {t-1} 2 \right \rceil. \label{eq:ub2}
	\end{align}
\end{lemma}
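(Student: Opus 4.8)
The plan is to prove the bound~(\ref{eq:ub2}) by induction on $n$, mirroring the proof of Lemma~\ref{thm:path-forest}. Set $M = \lceil n^{1/2} + (t-1)/2 \rceil$. By Lemma~\ref{obs:covering} and the remark following it, it suffices to find a vertex $v$ and an integer $r \le M-1$ so that the path-forest $G'$ obtained from $G$ by deleting $N_r[v]$ satisfies $b(G') \le M-1$; prepending $v$ (as first fire, with burning radius $r$) to an optimal burning sequence of $G'$ then burns $G$ in at most $M$ steps.

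First I would clear two boundary regimes. If $t=1$, then $G$ is a path and $b(G)=\lceil n^{1/2}\rceil = M$. If $t = \lceil n^{1/2}\rceil$, then Lemma~\ref{thm:path-forest} already suffices, because one checks the purely numerical inequality
\[
\left\lfloor \frac{n}{2t} \right\rfloor + t \;\le\; \left\lceil n^{1/2} + \frac{t-1}{2} \right\rceil
\]
whenever $t = \lceil n^{1/2}\rceil$; writing $n^{1/2} = t-\theta$ with $0\le\theta<1$ this reduces to comparing $\lfloor \tfrac t2 - \theta + \tfrac{\theta^2}{2t}\rfloor$ with $\lceil \tfrac t2 - \tfrac12 - \theta\rceil$, which holds since $\theta^2/(2t) < 1/2$.

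For the main range $2 \le t \le \lceil n^{1/2}\rceil - 1$ (so $n > t^2$) I would split exactly as in Lemma~\ref{thm:path-forest}. If the largest component has order at least $2M$, delete a subpath of order $2M-1$ from one of its ends; the resulting $G'$ has $t$ components and order $n' = n-(2M-1)$, and from $M \ge n^{1/2}+(t-1)/2$ one gets $n' \le (n^{1/2}-1)^2$, hence $\lceil (n')^{1/2} + (t-1)/2\rceil \le M-1$. If $G'$ still satisfies the hypothesis $t \le \lceil (n')^{1/2}\rceil$, the induction hypothesis closes this case; if not — which forces $n' \le (t-1)^2$ — apply Lemma~\ref{thm:path-forest} to $G'$ instead and verify $\lfloor n'/(2t)\rfloor + t \le M-1$, which is exactly where the strict inequality $n>t^2$ (equivalently $n^{1/2}>t$, so $M-1>3(t-1)/2$) is essential. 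Otherwise every component has order at most $2M-1$; then delete the largest component $a_1$ entirely (it is covered by one neighborhood of radius $\lceil (a_1-1)/2\rceil \le M-1$), leaving $t-1$ components of total order $n-a_1 \le n(t-1)/t$. Since $a_1 \ge \lceil n/t\rceil > n^{1/2}$ one gets $\lceil (n-a_1)^{1/2} + (t-2)/2\rceil \le M-1$, so the induction hypothesis applies once one checks its hypothesis $t-1 \le \lceil (n-a_1)^{1/2}\rceil$; a short estimate shows this can fail only for $t\le3$, and there it holds anyway because $n-a_1 \ge t-1 > (t-2)^2$.

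I expect the main obstacle to be bookkeeping rather than a new idea: one must verify in each branch that the reduced forest lands in a regime handled either by the induction hypothesis or by Lemma~\ref{thm:path-forest}, and confirm the accompanying $\lfloor\cdot\rfloor$/$\lceil\cdot\rceil$ estimates — in particular that $n>t^2$ is precisely what makes the fallback to Lemma~\ref{thm:path-forest} succeed, and that the failure of the inductive hypothesis after deleting the largest component is confined to $t\le 3$. A minor point to watch is the degenerate situation in which the largest component has order exactly $2M-1$ (or a deletion empties a component): this should be absorbed into the ``all components have order at most $2M-1$'' branch so that the component count behaves as claimed.
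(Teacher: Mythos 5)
Your proposal is correct and follows essentially the same route as the paper: induction on $n$, with the boundary case $t=\lceil n^{1/2}\rceil$ dispatched numerically via Lemma~\ref{thm:path-forest}, and the inductive step splitting on whether the largest component is long enough to absorb a full closed neighborhood or is instead removed whole (using $a_1\ge\lceil n/t\rceil$ in the latter case). The one divergence is that you delete a neighborhood of radius $M-1$ where the paper deletes one of radius $\lceil n^{1/2}\rceil-1$ and uses the threshold ``radius $>\lceil n^{1/2}\rceil-1$'' for the case split; the paper's smaller radius guarantees $v(G')\ge n-2\lceil n^{1/2}\rceil+1$ in both branches, so the hypothesis $t\le\lceil v(G')^{1/2}\rceil$ needed for the induction never fails, and the fallback to Lemma~\ref{thm:path-forest} and the small-$t$ analysis you anticipate (which, incidentally, needs a slightly more careful estimate at $t=4$, where $n-a_1\ge t-1$ does not beat $(t-2)^2$) become unnecessary.
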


\begin{proof}
We proceed by induction on $n$, and first verify the lemma for $t = \lceil n^{1/2} \rceil$ and $t= \lfloor n^{1/2} \rfloor$ (which includes the case $n=1$). For $t = \lceil n^{1/2} \rceil$, applying
Lemma~\ref{thm:path-forest},
\begin{align*}
b(G) &\le \left \lfloor \frac{n}{2t} \right \rfloor + t \\
& =  \left \lfloor \frac{n}{2 \lceil n^{1/2} \rceil}  \right  \rfloor + \lceil n^{1/2} \rceil .
\end{align*}
It is an elementary exercise to verify that this bound is no greater than $\left \lceil n^{1/2} + (\lceil n^{1/2} \rceil-1)/2 \right \rceil$. Similarly, we can apply Lemma~\ref{thm:path-forest} to
the case $t= \lfloor n^{1/2} \rfloor$.

Now suppose that $t < \lfloor n^{1/2} \rfloor$ so that $t \le \lfloor n^{1/2} \rfloor - 1$ and assume the theorem is true for smaller values of $n$.  Analogously to Lemma~\ref{thm:path-forest}, we
remove from $G$ a closed neighborhood of radius at most $M-1$, where $M= \lceil n^{1/2} + (t-1)/2\rceil$, and show that the burning number of the new graph is at most $M-1$.

We consider two cases.

\medskip

\noindent {\em Case 1.} The largest components of $G$ have radius greater than $\lceil n^{1/2} \rceil -1$.

\medskip

In this case, we remove a path of radius $\lceil n^{1/2} \rceil -1  \le M-1$ from $G$ to give a new path-forest $G'$ with $t$ components and order $v(G') = n- 2\lceil n^{1/2} \rceil + 1 \le (n^{1/2} - 1)^2$.
Since $t \le \lfloor n^{1/2} \rfloor - 1 \le v(G')^{1/2}$, by induction we have
\[
b(G') \le \left \lceil (n^{1/2}  -1 )+ \frac{t-1} 2 \right \rceil = M-1.
\]

\medskip

\noindent {\em Case 2.} The largest components of $G$ have radius at most $\lceil n^{1/2} \rceil -1$.

\medskip

In this case, we remove a largest component of $G$ to leave a path-forest $G''$ with $t-1$ components. The largest components of $G$ must have order at least the ceiling of the average order of the
components of $G$, which is $\lceil n/t \rceil  \ge \lceil n^{1/2} \rceil$ (the inequality following from $t \le \lfloor n^{1/2} \rfloor - 1$). This implies that the order of $G''$ is at most $v(G'')
\le n-\lceil n^{1/2} \rceil \le ( n^{1/2}  -1/2)^2$.

Further, since $v(G'') \ge n-2 \lceil n^{1/2} \rceil +1$, we must have that $t-1 \le v(G'')^{1/2}$. Hence, by induction,
\[
b(G'') \le \left \lceil (n^{1/2} -1/2 )+ \frac{(t-1)-1}{2} \right \rceil =M-1,
\]
and the proof follows.
\end{proof}

Lemmas~\ref{thm:path-forest} and~\ref{lem:path-forest2} and their proofs define the algorithm \textsf{GREEDY} for a path-forest $G$ of order $n$ with $t$ components. The algorithm recursively covers
$G$ with closed neighborhoods, as in the proofs of Lemmas~\ref{thm:path-forest} and~\ref{lem:path-forest2}, and these closed neighborhoods define a burning sequence, as described in the proof of the
Lemma~\ref{obs:covering}.

More precisely, \textsf{GREEDY} can be described as follows.

\begin{enumerate}
	\item If $t \ge \lfloor n^{1/2} \rfloor$, then set $r = \lfloor n/(2t) \rfloor +t-1$; else, set $r= \lceil n^{1/2} \rceil -1$.
	\item If the largest component of $G$ has radius at most $r$, then remove it to leave a new graph $G'$ with $t-1$ components; otherwise, remove a closed neighborhood of radius $r$ from a
largest component of $G$ to leave a new graph $G'$ with $t$ components.
	\item If $G'$ is empty, then stop; else, set $G=G'$, update $n$ and $t$ and return to step (1).
\end{enumerate}

We illustrate the algorithm with an example. We denote a path-forest $G$ by a sequence of positive integers, where coordinates specify the order of the components. We start with the
path-forest $(13,11,11),$ so that $t=3$ and $n=35$. Since $t < \lfloor n^{1/2} \rfloor$, we set $r = \lceil n^{1/2} \rceil - 1 =5$ and remove a closed neighborhood of radius $5$ from the component
of order $13$. This corresponds to the first vertex $v_1$ in the burning sequence, as depicted on the left side of Figure~\ref{example}. The new graph is $(2,11,11)$, and we still have $t < \lfloor
n^{1/2} \rfloor$, so we set $r = \lceil n^{1/2} \rceil -1= 4$ and remove a closed neighborhood of radius $4$ from one of the components of size $11$, leaving a new graph $(2,2,11)$. Again, the second
vertex $v_2$ in the burning sequence is depicted on the left of Figure~\ref{example}. Now we have  $t \ge \lfloor n^{1/2} \rfloor$, so we set $r =\lfloor n/(2t) \rfloor +t -1= 4$ and remove a closed
neighborhood of radius $4$ from the component of size $11$, to leave the graph $(2,2,2)$. We omit all the details, but the algorithm continues with $r=3$ in the next step, producing a new graph
$(2,2)$, then $r=2$, producing the graph $(2)$ and finally $r=1$ producing the empty graph. Each closed neighborhood that is removed defines a new vertex in the burning sequence, and these $6$
vertices $v_1\ldots,v_6$ are depicted on the left of Figure~\ref{example}. The vertices of the original graph $G$ can be written $$V(G)=N_5[v_1] \cup N_4[v_2] \cup N_4[v_3] \cup N_3[v_4] \cup
N_2[v_5] \cup N_1[v_6].$$ By Lemma~\ref{obs:covering}, we have that $b(G) \le 7$. The seventh vertex $v_7$ in the burning sequence is set to be some other arbitrary vertex.

\begin{figure}[h!]
	\begin{center}
		\epsfig{figure=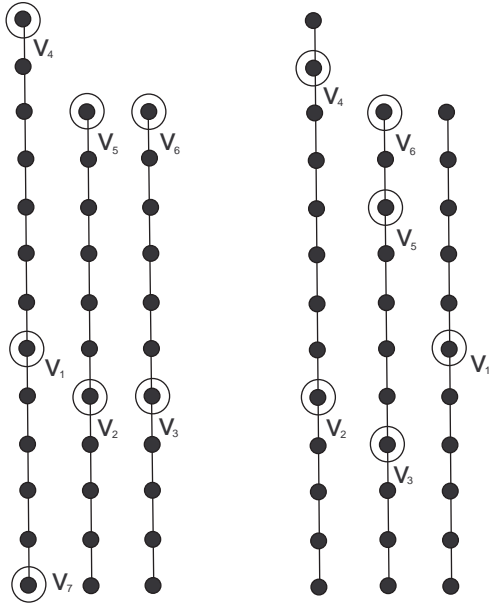}
		\caption{Two copies of the path-forest $(13,11,11)$ with a burning sequence from \textsf{GREEDY} on the left, and an optimal burning sequence on the right.} \label{example}
	\end{center}
\end{figure}

The algorithm is not necessarily optimal (as we would expect); indeed in this example, the burning number is equal to 6, and an optimal burning sequence is indicated on the right side of
Figure~\ref{example}.

Using the upper bounds of Lemmas~\ref{thm:path-forest} and~\ref{lem:path-forest2} and the lower bound~(\ref{eq:lb}), we can measure the performance of the algorithm \textsf{GREEDY}. These bounds are
plotted against $t$ in Figure~\ref{fig:plot} for $n=10,000$, where the lower bound is shown in red and the upper bounds are shown in orange and green. It is evident that the ratio between the upper
and lower bounds is greatest at $t=100=n^{1/2}$, when it is equal to approximately $3/2$.

\begin{figure}[h!]
	\begin{center}
		\epsfig{figure=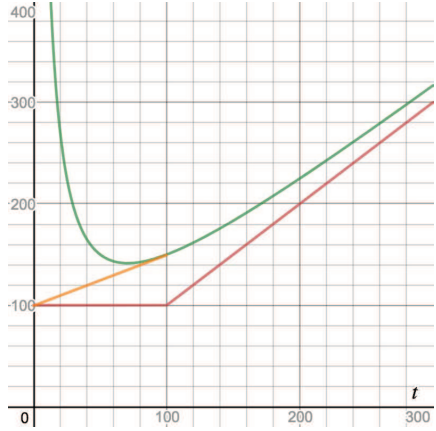,scale=0.2}
		\caption{Upper and lower bounds for the burning number of a path-forest.} \label{fig:plot}
	\end{center}
\end{figure}

We now show that the burning sequence produced from the algorithm \textsf{GREEDY} is a $\frac 3 2$-approximation for the optimal burning sequence of a path-forest.

\begin{theorem}\label{thgreed}
The number of steps $T_G$ required by \textsf{GREEDY} to burn a path-forest $G$ satisfies
\[
T_G \le \frac 3 2 b(G).
\]
\end{theorem}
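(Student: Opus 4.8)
The plan is to chain the upper bounds of Lemmas~\ref{thm:path-forest} and~\ref{lem:path-forest2} with the lower bound~(\ref{eq:lb}). The first task is to verify that the length $T_G$ of the burning sequence \textsf{GREEDY} produces satisfies
\[
T_G \le
\begin{cases}
\left\lfloor \frac{n}{2t} \right\rfloor + t, & t \ge \lfloor n^{1/2} \rfloor, \\
\left\lceil n^{1/2} + \frac{t-1}{2} \right\rceil, & t < \lfloor n^{1/2} \rfloor .
\end{cases}
\]
This is built into the design of \textsf{GREEDY}: at each stage it removes a closed neighborhood (or a whole component) of radius at most one less than the bound currently applicable to the graph, chosen exactly as in the corresponding case of the proof of Lemma~\ref{thm:path-forest} or~\ref{lem:path-forest2}, and those proofs show that the applicable bound of the resulting graph drops by at least one. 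Writing the successive radii as $r_1, r_2, \dots$, one obtains $r_k \le M - k$, where $M$ is the bound applicable to the original $G$; hence by Lemma~\ref{obs:covering} the resulting sequence burns $G$ within $M$ steps, i.e.\ $T_G \le M$. The only subtle point is the behaviour near $t \approx n^{1/2}$, where \textsf{GREEDY} may switch branches between recursive calls; there one must check that the Lemma~\ref{thm:path-forest} bound does not exceed the Lemma~\ref{lem:path-forest2} bound, which is precisely the boundary computation already carried out inside the proof of Lemma~\ref{lem:path-forest2}.

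Given this, since~(\ref{eq:lb}) yields $b(G) \ge \max\{\lceil n^{1/2}\rceil, t\}$, it suffices to show that the right-hand side of the display is at most $\frac 3 2 \max\{\lceil n^{1/2}\rceil, t\}$. I would split into three cases according to the position of $t$ relative to $n^{1/2}$. If $t \ge n^{1/2}$, then $b(G) \ge t$ and $n \le t^2$, so $T_G \le \lfloor n/(2t)\rfloor + t \le \lfloor t/2 \rfloor + t \le \frac 3 2 t \le \frac 3 2 b(G)$. If $t < \lfloor n^{1/2}\rfloor$, then $b(G) \ge \lceil n^{1/2}\rceil \ge n^{1/2}$ and $t \le n^{1/2} - 1$, so $n^{1/2} + (t-1)/2 \le \frac 3 2 n^{1/2} - 1$ and therefore $T_G \le \lceil \frac 3 2 n^{1/2}\rceil - 1 < \frac 3 2 n^{1/2} \le \frac 3 2 b(G)$. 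The remaining possibility $\lfloor n^{1/2}\rfloor \le t < n^{1/2}$ forces $n$ to be a non-square and $t = \lfloor n^{1/2}\rfloor$; then $b(G) \ge \lceil n^{1/2}\rceil = t+1$ and $n < (t+1)^2$, so $n/(2t) < t/2 + 1 + 1/(2t) \le t/2 + 3/2$, whence $T_G \le \lfloor n/(2t)\rfloor + t < \frac 3 2 (t+1) \le \frac 3 2 b(G)$.

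The genuinely delicate part is the first task: pinning down precisely which bound \textsf{GREEDY}'s step count obeys, and checking that the two branches of the algorithm mesh at the threshold $t \approx n^{1/2}$ so that the inductive estimate propagates; once the displayed bound on $T_G$ is in hand, the three-case argument is routine floor/ceiling bookkeeping. I would also note that the factor $\frac 3 2$ cannot be improved by this method, since at $t = n^{1/2}$ the Lemma~\ref{thm:path-forest} bound $\lfloor n/(2t)\rfloor + t$ equals exactly $\frac 3 2$ times the lower bound~(\ref{eq:lb}), matching the behaviour seen in Figure~\ref{fig:plot}.
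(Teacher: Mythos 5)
Your proposal is correct and follows essentially the same route as the paper: both arguments bound $T_G$ by the applicable upper bound from Lemma~\ref{thm:path-forest} or~\ref{lem:path-forest2} and compare against the lower bound~(\ref{eq:lb}) via a case split on $t$ relative to $n^{1/2}$. Your version is somewhat more careful than the paper's in two respects — you make explicit the (implicitly assumed) claim that \textsf{GREEDY}'s step count obeys the lemma bounds, and your three-case split aligns with the algorithm's actual branch condition $t \ge \lfloor n^{1/2}\rfloor$ rather than the paper's threshold $\lceil n^{1/2}\rceil$ — but the underlying argument is the same.
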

\begin{proof}
Suppose first that $t \le \lceil n^{1/2} \rceil$. Then we have that
\begin{align*}
\frac{T_G}{b(G)} & \le \frac{ \lceil n^{1/2} + (t-1)/2 \rceil}{\lceil n^{1/2} \rceil} \\	
& \le \frac{ \lceil n^{1/2} + (\lceil n^{1/2} \rceil -1)/2 \rceil}{\lceil n^{1/2} \rceil} \\
& \le \frac 3 2,	
\end{align*}
where the first inequality holds by Lemma~\ref{lem:path-forest2} and~(\ref{eq:lb}), and the second inequality holds since $t \le \lceil n^{1/2} \rceil$.

In the case that $t \ge \lceil n^{1/2} \rceil +1 $, we have that
\begin{align*}
\frac{T_G}{b(G)} & \le \frac{ \lfloor n/(2t) \rfloor + t}{t} \\
& \le \frac{ \left \lfloor \frac{n}{2(\lceil n^{1/2}\rceil +1)} \right \rfloor}{\lceil n^{1/2}\rceil +1} + 1  \\
& \le \frac 3 2.
\end{align*}
where the first inequality holds by Lemma~\ref{thm:path-forest} and~(\ref{eq:lb}), and the second inequality holds since $t \ge \lceil n^{1/2} \rceil +1$.

\end{proof}

\section{Bounding the burning number of a spider}

Before turning to spiders, we prove a crucial ``meta-lemma'' which may be applied to families of graphs other than spiders.

\begin{lemma} \label{lem:meta}
	Let $\mathcal{G}$ be a family of connected graphs, and let $\hat{\mathcal{G}}$ be some subset of graphs in $\mathcal{G}$ with the following properties. For every $G$ in $\hat{\mathcal{G}}$, there is some vertex $v \in V(G)$ and some $r \le \lceil |V(G)|^{1/2} \rceil -1 $ such that either $N_r[v]=V(G)$ or

\begin{enumerate}
\item the closed neighborhood $N_r[v]$ has order at least $2 \lceil{|V(G)|^{1/2}} \rceil  -1 $ and
\item the graph induced by $V(G) \setminus N_r[v]$ is connected and contained in $\mathcal{G}$.
\end{enumerate}
If $b(G) \le \lceil{|V(G)|^{1/2}} \rceil  $ for all $G$ in $\mathcal{G} \setminus \hat{\mathcal{G}}$, then $b(G) \le  \lceil{|V(G)|^{1/2}} \rceil $ for all $G$ in $\mathcal{G}$.
	\end{lemma}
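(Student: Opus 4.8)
The plan is to prove Lemma~\ref{lem:meta} by strong induction on $|V(G)|$, using Lemma~\ref{obs:covering} as the mechanism for peeling off one closed neighborhood at a time. The base cases are the graphs of small order (say, those for which the induction cannot recurse, or simply $|V(G)| = 1$), for which $b(G) = 1 = \lceil 1^{1/2} \rceil$ holds trivially; more generally any graph in $\mathcal{G} \setminus \hat{\mathcal{G}}$ already satisfies the desired bound by hypothesis, so those require no work. So fix $G \in \mathcal{G}$ and assume the bound $b(H) \le \lceil |V(H)|^{1/2} \rceil$ holds for all graphs $H$ in $\mathcal{G}$ with $|V(H)| < |V(G)|$.

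First I would dispose of the easy case: if $G \in \mathcal{G} \setminus \hat{\mathcal{G}}$, the conclusion is immediate from the hypothesis of the lemma. So assume $G \in \hat{\mathcal{G}}$, and let $v$ and $r \le \lceil |V(G)|^{1/2} \rceil - 1$ be the vertex and radius guaranteed by the hypothesis. If $N_r[v] = V(G)$, then by Lemma~\ref{obs:covering} (with $k=1$, $v_1 = v$, $r_1 = r \le \lceil |V(G)|^{1/2} \rceil - 1$) we get $b(G) \le \lceil |V(G)|^{1/2} \rceil$ and we are done. Otherwise, let $G'$ be the graph induced by $V(G) \setminus N_r[v]$; by property (2), $G'$ is connected and lies in $\mathcal{G}$, and by property (1), $|V(G')| = |V(G)| - |N_r[v]| \le |V(G)| - (2\lceil |V(G)|^{1/2} \rceil - 1)$. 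Writing $m = |V(G)|$ and $n = |V(G')|$, one checks the elementary inequality $n \le m - 2\lceil m^{1/2} \rceil + 1 \le (m^{1/2} - 1)^2$, whence $\lceil n^{1/2} \rceil \le \lceil m^{1/2} \rceil - 1$ (here one uses that $\lceil \sqrt{m} \rceil - 1 \ge \sqrt{m} - 1 \ge \sqrt{n}$, so the integer $\lceil \sqrt{m} \rceil - 1$ is an upper bound for $\sqrt{n}$, hence for $\lceil \sqrt{n} \rceil$). Since $n < m$ and $G' \in \mathcal{G}$, the induction hypothesis gives $b(G') \le \lceil n^{1/2} \rceil \le \lceil m^{1/2} \rceil - 1$.

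Finally I would stitch the two pieces together via Lemma~\ref{obs:covering}. By that lemma applied to $G'$, there are vertices $w_1, \ldots, w_k$ of $G'$ and integers $s_i \le (\lceil m^{1/2} \rceil - 1) - i$ with $V(G') = N_{s_1}[w_1] \cup \cdots \cup N_{s_k}[w_k]$ (neighborhoods taken in $G'$, hence a fortiori contained in the corresponding neighborhoods in $G$). Prepending $v$ with radius $r \le \lceil m^{1/2} \rceil - 1$ yields the covering $V(G) = N_r[v] \cup N_{s_1}[w_1] \cup \cdots \cup N_{s_k}[w_k]$ with radii $r \le \lceil m^{1/2} \rceil - 1$ and $s_i \le \lceil m^{1/2} \rceil - 1 - i = \lceil m^{1/2} \rceil - (i+1)$, so the $(i+1)$st radius is at most $\lceil m^{1/2} \rceil - (i+1)$ as required. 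By the reverse implication of Lemma~\ref{obs:covering}, $b(G) \le \lceil m^{1/2} \rceil = \lceil |V(G)|^{1/2} \rceil$, completing the induction.

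The only real obstacle is the arithmetic step showing $\lceil n^{1/2} \rceil \le \lceil m^{1/2} \rceil - 1$ from the size bound $n \le m - 2\lceil m^{1/2} \rceil + 1$; everything else is bookkeeping with Lemma~\ref{obs:covering}. The key observation that makes this go through cleanly is that one should bound $\sqrt{n}$ by the \emph{integer} $\lceil \sqrt{m} \rceil - 1$ directly (rather than fussing with $\lceil \cdot \rceil$ of a non-integer), using $n \le (\lceil \sqrt m\rceil - 1)^2$, which follows since $m \le \lceil \sqrt m \rceil^2$ gives $m - 2\lceil\sqrt m\rceil + 1 \le \lceil\sqrt m\rceil^2 - 2\lceil\sqrt m\rceil + 1 = (\lceil\sqrt m\rceil - 1)^2$. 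One should also double-check the degenerate possibility that $G'$ is disconnected or empty is excluded: emptiness is handled by the $N_r[v] = V(G)$ branch, and connectivity is explicitly guaranteed by property (2), so $G'$ is a legitimate member of $\mathcal{G}$ and the induction hypothesis applies.
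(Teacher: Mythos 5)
Your proposal is correct and follows essentially the same route as the paper's proof: induction on the order, peeling off $N_r[v]$ via Lemma~\ref{obs:covering}, and using the size bound $|V(G')|\le(\lceil|V(G)|^{1/2}\rceil-1)^2$ to drop the ceiling by one before applying the induction hypothesis. Your treatment of the arithmetic step and of the covering bookkeeping is just a more explicit version of what the paper leaves implicit.
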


\begin{proof}
Let $G$ be a graph in $\mathcal{G}$. The proof is by induction on the order $n$ of $G$. To verify the lemma for the base cases, let $G$ be a member of $\mathcal{G}$ of minimum order. If $G$ is not contained in $\hat{\mathcal{G}}$, then certainly $b(G) \le \lceil{|V(G)|^{1/2}} \rceil$. Otherwise, $G$ is contained in $\hat{\mathcal{G}}$ and by the minimality of the order of $G$, there must be some vertex $v \in V(G)$ and some $r \le \lceil |V(G)|^{1/2} \rceil -1 $ such that $N_r[v]=V(G)$, in which case clearly $b(G) \le \lceil{|V(G)|^{1/2}} \rceil$.
	
Suppose $G$ is in $\hat{\mathcal{G}}$ and satisfies items (1) and (2). Let $G'$ be the subgraph induced by $V(G) \setminus N_r[v]$. By Lemma~\ref{obs:covering}, it is sufficient to show that $b(G')
\le \lceil{n^{1/2}} \rceil -1$. Note that $G'$ has order at most $(\lceil{n^{1/2}} \rceil -1)^2$, so if $G'$ is in $\hat{\mathcal{G}}$, then $b(G') \le \lceil{n^{1/2}} \rceil - 1$, by induction.
Alternatively, if $G'$ is in $\mathcal{G} \setminus \hat{\mathcal{G}}$, then $b(G') \le \lceil{n^{1/2}} \rceil - 1$ by the premises of the lemma.
\end{proof}

Observe that Lemma~\ref{lem:meta} can be used to prove that the burning number of a path of order $n$ is at most $\lceil n^{1/2} \rceil$, by taking $\mathcal{G}=\hat{\mathcal{G}}$ to be the family of
all paths.

We can now prove that the burning number of a spider is at most $\lceil n^{1/2} \rceil$. Recall that the unique vertex of the spider with degree at least 3 is called the {\em head} and the paths from
the head to the leaf nodes are called the {\em arms} (which do not contain the head). We call the {\em length} of an arm the distance along that arm from the head to the leaf. For ease of notation,
we may identify a spider with a positive integer sequence $(a_1,a_2,\ldots, a_m)$, where for all $1\le i \le m$, $a_i$ is the length of each arm.

We will prove the result first for a set of smaller cases.

\begin{lemma} \label{lemsmall}
If $G$ is a spider of order $n \le 25$, then $b(G) \le \lceil n^{1/2} \rceil$.
\end{lemma}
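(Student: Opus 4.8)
The plan is to verify the bound directly for spiders of order at most $25$, organizing the argument by the value $M = \lceil n^{1/2}\rceil$, which ranges over $\{1,2,3,4,5\}$ as $n$ ranges over $\{1,\ldots,25\}$. For each such $M$ the target is to show $b(G)\le M$, and by Lemma~\ref{obs:covering} this amounts to exhibiting vertices $v_1,\ldots,v_k$ and radii $r_i\le M-i$ whose closed neighborhoods cover $V(G)$. The cases $M\le 2$ (that is, $n\le 4$) are immediate: a spider of order at most $2$ is a path and a spider of order $3$ or $4$ has radius at most $2$, so one or two fires suffice. The substantive work is $M=3,4,5$.

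The main tool will be the following reduction, essentially the meta-lemma specialized to small spiders. Write the spider as $(a_1,\ldots,a_m)$ with $a_1\ge a_2\ge\cdots\ge a_m$. First I would handle the case where the spider has small radius: if the longest arm has length $a_1\le M-1$, then the whole spider lies in $N_{M-1}[h]$ where $h$ is the head, so one fire of radius $M-1$ covers everything and $b(G)\le M$ follows since $M-1\ge \lceil (n')^{1/2}\rceil$ would not even be needed — a single neighborhood does it. Otherwise $a_1\ge M$. In that case I would burn a vertex on the longest arm at distance $M-1$ from the arm's leaf (equivalently, place $v_1$ so that $N_{M-1}[v_1]$ swallows the outermost $2M-1$ vertices of that arm, or the whole arm plus part of the head region if the arm is short enough), thereby removing at least $\max\{2M-1,\ a_1\}$ vertices and leaving a graph $G'$ that is again a spider (or a path, or a path-forest of at most two components once the head is consumed) of order $n'\le n-(2M-1)\le (M-1)^2$. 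One then recurses: $b(G')\le M-1$ by the same analysis at the next level down, and Lemma~\ref{obs:covering} glues the covers together to give $b(G)\le M$.

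Carrying this out requires checking that the residual graph $G'$ stays inside a class for which the inductive hypothesis (or a direct argument) applies. The delicate point is that after removing a neighborhood centered on an arm, $G'$ need not be a spider: if the head gets burned, $G'$ is a path-forest, and for path-forests of order $n'$ with $t'$ components we already have the bound $b(G')\le\lceil (n')^{1/2}\rceil + t' - 1$ from~\cite{BJR}, or the sharper Lemmas~\ref{thm:path-forest} and~\ref{lem:path-forest2}; since here $t'\le 2$ and $n'\le(M-1)^2$, these give $b(G')\le M-1$ in every subcase with $n'$ small, possibly after noting $n'$ is small enough that $\lceil (n')^{1/2}\rceil + 1\le M-1$. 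If the head is not burned, $G'$ is a spider of strictly smaller order (or a path), and the induction on $n$ closes.

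The step I expect to be the main obstacle is the bookkeeping for the borderline configurations — spiders where $n$ sits just below a perfect square (for instance $n=24$ or $n=25$ with many arms of nearly equal length, such as $(5,5,5,5,4)$ or $(3,3,3,3,3,3,3,3)$), where a single first fire of radius $M-1$ removes exactly $2M-1$ vertices and the remaining $n-(2M-1)$ vertices must be covered by $M-1$ further fires of radii $M-2,M-3,\ldots,0$. Here one must be careful to choose $v_1$ so that the leftover graph really does decompose into few enough components of small enough total order; in the worst cases this may force a finite check over the possible arm-length multisets with $n\le 25$, which is tedious but entirely mechanical. I would present the generic reduction cleanly and then dispatch the handful of extremal multisets by an explicit covering, rather than attempting a uniform formula.
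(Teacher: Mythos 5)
Your overall strategy -- a generic first-fire reduction followed by a finite check of the extremal arm-length multisets -- is the same shape as the paper's argument, which uses Lemma~\ref{lem:meta} to cap the arm lengths, burns the head to eliminate short arms, and then exhibits explicit burning sequences for the handful of surviving spiders ($(6,5,4)$ and $(5,5,5)$ for $n=16$; $(8,8,8)$, $(8,6,5,5)$, $(7,7,5,5)$, $(7,6,6,5)$, $(6,6,6,6)$ for $n=25$). However, the generic reduction as you have written it contains a genuine gap. First, the claim that once the head is consumed the residual graph is ``a path-forest of at most two components'' is false: if $v_1$ sits close enough to the head that $N_{M-1}[v_1]$ swallows it, every arm not fully consumed contributes its own component, so $t'$ can be as large as $m-1$. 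Second, and more seriously, the assertion that Lemmas~\ref{thm:path-forest} and~\ref{lem:path-forest2} or the bound $\lceil (n')^{1/2}\rceil + t'-1$ ``give $b(G')\le M-1$ in every subcase'' is simply not true, even when $t'$ really is small. Take $(5,5,5)$ with $n=16$, $M=4$: your choice of $v_1$ (distance $3$ from the leaf of one arm) removes $8$ vertices and leaves two paths of order $4$, for which you need $b\le 3$; but Lemma~\ref{thm:path-forest} gives $\lfloor 8/4\rfloor+2=4$, Lemma~\ref{lem:path-forest2} gives $\lceil \sqrt{8}+1/2\rceil=4$, and the bound from \cite{BJR} gives $\lceil\sqrt 8\rceil+1=4$. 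Similarly $(6,6,6,6)$ with $n=25$ leaves three paths of order $4$ needing $b\le 4$, and all three bounds give at least $5$. In each case the required covering exists (radii $2,1,0$ do cover two $P_4$'s), but it must be exhibited by hand; the cited machinery does not deliver it, so your induction does not close as stated.

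You do anticipate this with the escape hatch of ``a finite check over the possible arm-length multisets,'' and that is indeed the content of the paper's proof; but in your write-up that check is deferred as ``tedious but mechanical'' while the surrounding text incorrectly asserts the general lemmas cover every subcase. To repair the argument you should do what the paper does: use Lemma~\ref{lem:meta} to assume every arm has length at most $2\lceil n^{1/2}\rceil-2$, observe that burning the head first absorbs every arm of length at most $\lceil n^{1/2}\rceil-1$ so that all arms may be assumed long, deduce from the total order $n\le 25$ that only finitely many multisets survive, and then give explicit coverings (equivalently, burning sequences) for each of them. The explicit coverings are the irreducible part of the proof and cannot be replaced by the path-forest bounds of Section~2.
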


\begin{proof}
Without loss of generality, we may assume $n$ is one of the square integers 1, 4, 9, 16, or 25. The cases $n=1,4$ are trivial and so omitted.

Suppose first that $n=9.$ By Lemma~\ref{lem:meta}, we may assume that the length of the arms is at most four. If the first source of fire is the head of the spider, then we will burn any arm with
length at most 2. Thus, we may assume without loss of generality each arm has length at least 3. But then we would have at least 3 arms of length at least 3, which would force the spider to have 10
or more vertices. Hence, the case $n=9$ follows.

For the case $n=16,$ by an analogous discussion, each arm has length at most 6 and at least 4, and there are at most 3 arms. It is straightforward to check the only non-trivial cases are $(6,5,4)$
and $(5,5,5).$ For each spider, the burning number is 4; see Figure~\ref{fig16}.
\begin{figure}[h!]
\begin{center}
\epsfig{figure=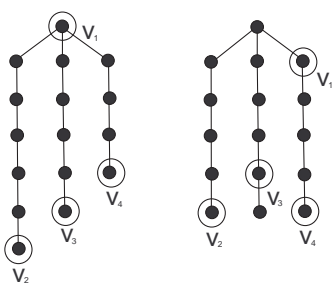,scale=0.8}
\caption{The spiders $(6,5,4)$ and $(5,5,5)$ with optimal burning sequences $(v_1,v_2,v_3,v_4)$.} \label{fig16}
\end{center}
\end{figure}

\begin{figure}[h!]
\begin{center}
\epsfig{figure=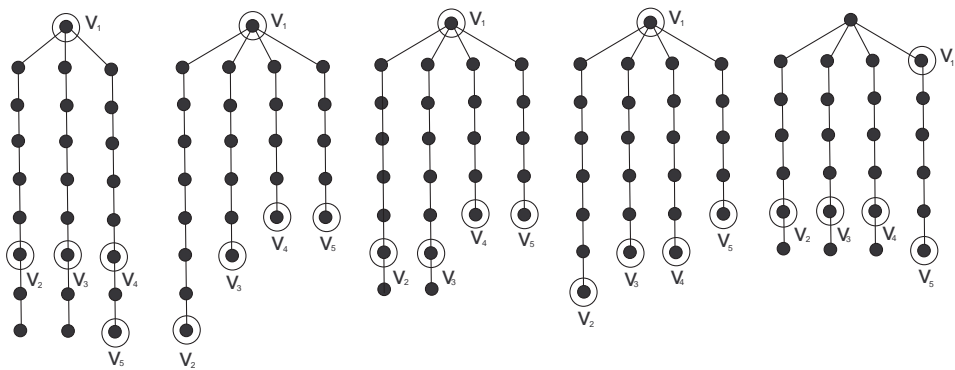,scale=0.8}
\caption{The spiders $(8,8,8)$, $(8,6,5,5)$, $(7,7,5,5)$, $(7,6,6,5)$, and $(6,6,6,6)$ with optimal burning sequences $(v_1,v_2,v_3,v_4,v_5)$.} \label{fig25}
\end{center}
\end{figure}
For the case $n=25,$ by an analogous discussion, each arm has length at most 8 and at least 5, and there are at most 4 arms. The only non-trivial cases here are $(8,8,8)$, $(8,6,5,5)$, $(7,7,5,5)$,
$(7,6,6,5)$, and $(6,6,6,6).$ Each of these trees has burning number 5; see Figure~\ref{fig25}.
\end{proof}

We now come to the main result of the paper.

\begin{theorem}\label{maint}
	The burning number of a spider graph $G$ of order $n$ satisfies $b(G) \le \lceil n^{1/2} \rceil$.
\end{theorem}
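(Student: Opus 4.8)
The plan is to apply the meta-lemma (Lemma~\ref{lem:meta}) with $\mathcal{G}$ taken to be the family of all spiders together with all paths (since removing a closed neighborhood from a spider can leave a path, or even a path-forest, so I may need to be slightly careful about what $\mathcal{G}$ is). The natural choice is to let $\hat{\mathcal{G}}$ be the set of spiders of order $n \ge 26$ whose longest arm has length at most $\lceil n^{1/2}\rceil - 1$ — for spiders with a long arm, removing a radius-$(\lceil n^{1/2}\rceil-1)$ ball centered at the leaf of the longest arm chops off $2\lceil n^{1/2}\rceil - 1$ vertices and leaves a shorter spider (or a path), so items (1) and (2) of Lemma~\ref{lem:meta} hold immediately. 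Lemma~\ref{lemsmall} handles $n \le 25$, and paths are handled by the known bound $b(P_n) = \lceil n^{1/2}\rceil$. So the crux is: \emph{given a spider $G$ of order $n \ge 26$ all of whose arms have length at most $\lceil n^{1/2}\rceil - 1$, exhibit a vertex $v$ and radius $r \le \lceil n^{1/2}\rceil - 1$ such that $N_r[v]$ covers at least $2\lceil n^{1/2}\rceil - 1$ vertices and the complement is connected (a sub-spider or path) — or else covers everything.}

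The key cases to organize the argument around are driven by how many arms $G$ has and how long they are. First, if $G$ has at most $2$ arms it is a path, already done. If $G$ has exactly $3$ arms, a good move is to center a ball at the head: $N_r[\text{head}]$ with $r = \lceil n^{1/2}\rceil - 1$ reaches distance $r$ down every arm; since each arm has length $\le r$ this could already cover everything, and if not, the leftover consists of the three arm-tips, which is disconnected — so instead I would center the ball at the leaf of the longest arm, travel up that arm, through the head, and some distance down a second arm; I'd need to check the $2\lceil n^{1/2}\rceil-1$ lower bound on the ball size and that the remainder (a path plus possibly a short stub, i.e., still a spider or path) is connected. More generally, the clean reduction is: take $v$ = leaf of a longest arm $a_1$; then $N_r[v]$ with $r = \lceil n^{1/2}\rceil - 1$ covers all of arm $a_1$ (length $a_1 \le r$), the head, and the first $r - a_1 - 1$ vertices along the next-longest arm. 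To ensure this ball has $\ge 2\lceil n^{1/2}\rceil-1$ vertices one needs $a_1 + 1 + (\text{amount eaten into arm }2) \ge 2\lceil n^{1/2}\rceil - 1$; and crucially the complement must stay connected, which forces the ball to eat into \emph{at most one} other arm — so this works only when there are few enough long arms. When there are many arms (so $n$ is forced large relative to $\lceil n^{1/2}\rceil$ and all arms are "short"), I would instead peel off a ball centered at the head covering several whole short arms, but that again risks disconnecting the remaining tips; the honest fix is to cover one whole arm plus part of another, repeatedly.

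I expect the main obstacle to be exactly this tension: the meta-lemma demands that $V(G)\setminus N_r[v]$ be \emph{connected}, which for a spider means the removed ball may only "bite into" one arm partway (beyond whatever whole arms and the head it swallows), so I must verify that under the standing assumption $n \ge 26$ and $a_1 \le \lceil n^{1/2}\rceil - 1$ there is always enough mass concentrated in "one longest arm plus the head plus a bite of the second arm" to reach the threshold $2\lceil n^{1/2}\rceil - 1$. The worst case is when all arms have nearly equal moderate length; then one should check that a ball of radius $\lceil n^{1/2}\rceil - 1$ at a leaf still captures $\ge 2\lceil n^{1/2}\rceil - 1$ vertices because $a_1 \ge \lceil (n-1)/m\rceil$ where $m$ is the number of arms, and when $m$ is small this is large, while when $m$ is large the head-centered ball covering $\lfloor (\lceil n^{1/2}\rceil - 1)/(\text{something})\rfloor$ whole arms does the job — so the real work is a careful case split on $m$ versus $\lceil n^{1/2}\rceil$, with the boundary regime (a couple of arms of length close to $\lceil n^{1/2}\rceil - 1$, giving $n$ just above $25$) being where the base-case cutoff $n \le 25$ in Lemma~\ref{lemsmall} is pinned down, and I would double-check that cutoff is generous enough. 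Routine interval arithmetic with $\lceil n^{1/2}\rceil$ will fill in the remaining details.
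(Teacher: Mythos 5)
There is a genuine gap, and it is precisely at the point you flag as "the main obstacle." Your plan tries to stay inside the framework of Lemma~\ref{lem:meta}, which requires the complement of the removed ball to be \emph{connected}. For a spider with at least three arms, any ball large enough to be useful in the "all arms short" regime must contain the head (a ball of radius $r\le\lceil n^{1/2}\rceil-1$ centered at a leaf and staying inside one arm contains only $r+1\le\lceil n^{1/2}\rceil$ vertices, well short of the required $2\lceil n^{1/2}\rceil-1$; note also that your calibration of the "long arm" case is off for the same reason --- the paper needs an arm of length at least $2\lceil n^{1/2}\rceil-1$ and centers the ball at distance $\lceil n^{1/2}\rceil-1$ from the leaf, not at the leaf). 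Once the ball contains the head, it eats the same distance into \emph{every} remaining arm (it is a metric ball, so you cannot choose to bite into only the next-longest arm), and the complement is the set of arm-tips: a disconnected path-forest. You cannot repair this by enlarging $\mathcal{G}$ to include path-forests, since the meta-lemma's conclusion $b(G)\le\lceil|V(G)|^{1/2}\rceil$ is false for path-forests in general (e.g., a co-clique on $t$ vertices has burning number $t$). Your suggested fixes ("cover one whole arm plus part of another, repeatedly") do not fit the single-removal-plus-induction structure of the meta-lemma and do not resolve the disconnection.

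The missing idea is the paper's central one: in the short-arms case, give up on connectivity, remove $N_{\alpha-1}[\mathrm{head}]$ where $\alpha=\lceil n^{1/2}\rceil$, observe that what remains is a path-forest $H$ with $t$ components each of order at most $\alpha-1$, and then prove $b(H)\le\alpha-1$ using the path-forest bound of Lemma~\ref{thm:path-forest} (together with a direct covering argument for small $t$ and a case analysis over the range $\lfloor\alpha/2+3/2\rfloor\le t\le\alpha-2$ using the volume bound $v(H)\le(\alpha-1)(\alpha+1-t)$). This is exactly why Section~2 develops the path-forest lemmas --- they are the engine of the spider proof --- and your proposal never invokes them in the main argument.
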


\begin{proof} Set $\alpha = \lceil n^{1/2} \rceil$, and we may assume $\alpha > 5$ by Lemma~\ref{lemsmall}.	Applying Lemma~\ref{obs:covering}, we wish to express the vertices $V(G)$ of the spider as $V(G) = N_{r_1}[v_1] \cup \ldots \cup
N_{r_k}[v_k]$, where $r_i \le \alpha-i$ for $i=1,\ldots,k$. We only need to prove the theorem in the case that all the arms of the spider have length at most $2 \alpha - 2$, because the alternative
case then follows automatically from Lemma~\ref{lem:meta}.

There are two cases.

\medskip

\noindent \textit{Case 1.} The spider $G$ has $\alpha-1$ arms of length $\alpha+1$.

\medskip

In this case, let $v$ be any vertex adjacent to the head of the spider, and consider the graph $G'$ obtained by removing $N_{\alpha-1}[v]$. The graph $G'$ is a path-forest with $\alpha-2$ paths of
order $2$ and one path of order $1$, so $\lfloor v(G')/(2t) \rfloor = 0$, where $t=\alpha -1$. By Lemma~\ref{thm:path-forest}, $b(G') \le \alpha-1 $, so $b(G) \le \alpha$.
	
%\subsection*{Case 2}

%	Next we verify the theorem separately for the special cases $n \le 25$. Note that the worst cases are when $n$ is a square, so we only need to consider the cases $n=1,4,9,16,25$. The cases $n=1$ and $n=4$ can be easily verified, so we turn to the other three cases, noting that we may assume the spider has at least 3 arms, since other the theorem is certainly true.
	
%	For $n=9$, first suppose the spider has 3 arms, which must have length no more than $2\alpha - 2 = 4$. The only possibilities are the spiders with arms of lengths $(2,3,3)$, $(2,2,4)$ or $(1,3,4)$, and it is easy to verify that these all have burning number at most $3$. If the spider has 4 arms, the possibilities are lengths $(2,2,2,2)$, $(1,2,2,3)$, $(1,1,3,3)$ or $(1,1,2,4)$, and again these all have burning number at most $3$.
	
\medskip

\noindent \textit{Case 2.} The spider $G$ does not have $\alpha-1$ arms of length $\alpha+1$.		

\medskip

Consider the graph $H$ obtained by removing $N_{\alpha-1}[v]$, where $v$ is now the head of the spider. The graph $H$ is a path-forest with $t$ components, for some $t \ge 1$ and since the
arms of $G$ have length at most $2\alpha -2$, each of the components $H_1,\ldots,H_t$ of $H$ have order at most $\alpha -1 $. Arrange the $H_k$ in decreasing order, and let $v_k$ be a center of
$H_k$. If $t \le \alpha /2$, then each $H_k$ can be covered by the neighborhood $N_{\alpha-k}[v_k]$, since $2(\alpha - k) -1 \ge 2(\alpha -t) -1 \ge \alpha -1$, and we are done. In fact, we can do
better than this: if $t=(\alpha+1)/2$ (so that $\alpha$ must be odd), then for $1 \le k \le t-1$, each $H_k$ can be covered by the neighborhood $N_{\alpha-k}[v_k]$ and $H_t$ can be covered by the
union of $N_{(\alpha - 1)/2}[v_t]$ and $N_1[x]$, where $x$ is the furthest leaf node from $v_k$ in $H_k$. (Note that the radii, 1 and $(\alpha - 1)/2$ of these neighborhoods are distinct, since
$\alpha \ge 5$.)

Hence, for the remainder of the proof we may assume that $t \ge \lfloor \alpha/2 + 3/2 \rfloor$. We will use Lemma~\ref{thm:path-forest} to show that $b(H) \le \alpha -1$, which is sufficient to
prove the theorem. Note that since we have removed at least $t (\alpha -1)+1$ vertices from $G$, the order $v(H)$ of $H$ satisfies
	\begin{align}
	v(H) &\le n-t (\alpha -1)-1 \nonumber\\
	&\le (\alpha - 1)(\alpha + 1 - t). \label{eq:v(H)-ub}
	\end{align}
Hence, we must have $t \le \alpha$. But since $v(H) \ge t$, it is not possible that $t = \alpha$, because then, by~(\ref{eq:v(H)-ub}), $v(H) \le \alpha -1 < t$.
	
	Suppose $t = \alpha - 1$. By~(\ref{eq:v(H)-ub}), we must have $v(H) \le 2(\alpha -1)$, so $v(H)/t = 2$. If all the components of $H$ have order $2$, we are in {\em Case 1}, so there must be a component of order 1. In this case, it is straightforward to show that $b(H) \le t = \alpha -1 $, so that $b(G) \le \alpha$.
	
	Hence, we may assume that $\lfloor \alpha/2 +3/2 \rfloor \le t \le \alpha -2$. By Lemma~\ref{thm:path-forest} and (\ref{eq:v(H)-ub}), we have that
	\begin{align}
	b(H) &\le \left \lfloor \frac{v(H)}{2t} \right \rfloor + t \nonumber\\
	&\le \left \lfloor \frac{(\alpha - 1)(\alpha + 1 - t)}{2t} \right \rfloor + t. \label{eq:large-t}
	\end{align}
	By considering the right side of~(\ref{eq:large-t}) as a function of real non-negative $t$, it may be shown using elementary calculus that it
 has a minimum for some $t$ that lies between $\lfloor \alpha/2 +3/2 \rfloor$ and $\alpha - 2$, and in this range it is maximized when $t$ is equal to these two end points.
	
	Substituting $t= \alpha - 2$ into~(\ref{eq:large-t}), we derive that
	\begin{align*}
	b(H) & \le \left \lfloor \frac{3(\alpha - 1)}{2(\alpha -2 )} \right \rfloor + \alpha - 2\\
	&= \left \lfloor \frac{\alpha +1}{2(\alpha -2 )} \right \rfloor + \alpha - 1\\
	& \le \alpha - 1,
	\end{align*}
	for $\alpha > 5$.
	
	Substituting $t=\lfloor \alpha/2 +3/2 \rfloor$ into~(\ref{eq:large-t}), we find that

\begin{eqnarray*}
	b(H) &\le & \left \lfloor \frac{(\alpha - 1)(\alpha + 1 - \lfloor \alpha/2 +3/2 \rfloor)}{2\lfloor \alpha/2 +3/2 \rfloor} \right \rfloor + \lfloor \alpha/2 +3/2 \rfloor \\
 & = & f(\alpha).
\end{eqnarray*}
An (omitted) analysis of cases for when $\alpha$ is even and odd shows that $f(\alpha) = \alpha - 1$ for all integers $\alpha
>5.$ Hence, this case follows and the proof is complete. \end{proof}

\end{document}